\theoremstyle{plain}
\newtheorem{theorem}{Theorem}[section]
\newtheorem{corollary}[theorem]{Corollary}
\newtheorem{lemma}[theorem]{Lemma}
\newtheorem{proposition}[theorem]{Proposition}
\newtheorem{conjecture}[theorem]{Conjecture}
\newtheorem{alttheorem}{Theorem}
\theoremstyle{remark}
\newtheorem{remark}[theorem]{Remark}
\numberwithin{equation}{section}
\def\shrug{\texttt{\raisebox{0.75em}{\char`\_}\char`\\\char`\_\kern-0.5ex(\kern-0.25ex\raisebox{0.25ex}{\rotatebox{45}{\raisebox{-.75ex}"\kern-1.5ex\rotatebox{-90})}}\kern-0.5ex)\kern-0.5ex\char`\_/\raisebox{0.75em}{\char`\_}}}
\newcommand{\N}{\mathbb{N}}
\newcommand{\Z}{\mathbb{Z}}
\newcommand{\R}{\mathbb{R}}
\newcommand{\ind}[1]{\mathbbm{1}_{\left\{#1\right\}}}
\newcommand{\crochet}[1]{{\left\langle #1 \right\rangle}}
\renewcommand{\bar}[1]{\overline{#1}}
\renewcommand{\tilde}[1]{\widetilde{#1}}
\renewcommand{\phi}{\varphi}
\renewcommand{\epsilon}{\varepsilon}
\newcommand{\E}{\mathbb{E}}
\renewcommand{\P}{\mathbb{P}}
\newcommand{\dd}{\mathrm{d}}
\newcommand{\egaldistr}{\stackrel{(d)}{=}}
\renewcommand{\rho}{\varrho}
\newcommand{\x}{\mathbf{x}}
\title{On the branching convolution equation $\mathcal E = \mathcal{Z} \circledast \mathcal E$}
\author{Pascal Maillard\thanks{Institut de Mathématiques de Toulouse, CNRS, UMR5219, Université de Toulouse, 118 route de Narbonne, 31062 Toulouse cedex 09, France. Supported in part by grants ANR-20-CE92-0010-01 and ANR-11-LABX-0040 (ANR program ``Investissements d'Avenir'').} \and Bastien Mallein\thanks{LAGA, UMR 7539, Universit\'e Sorbonne Paris Nord, 99 avenue Jean-Baptiste Clément, F-93430, Villetaneuse, France. Partially supported by the ANR grant MALIN (ANR-16-CE93-0003).}}
\date{\today}
\begin{document}

\maketitle

\begin{abstract}
We characterize all random point measures which are in a certain sense stable under the action of branching. Denoting by $\circledast$ the \emph{branching convolution operation} introduced in \cite{BM19}, and by $\mathcal{Z}$ the law of a random point measure on the real line, we are interested in solutions to the fixed point equation
\[
\mathcal E = \mathcal{Z} \circledast \mathcal E,
\]
with $\mathcal E$ a random point measure distribution. Under suitable assumptions, we characterize all solutions of this equation as shifted decorated Poisson point processes with a uniquely defined shift.
\end{abstract}

\noindent \textbf{Keywords:} branching random walk; branching convolution operation; point process; extremal process; smoothing transform; fixed-point equation.

\noindent \textbf{MSC 2020 subject classifications:} Primary: 60G55, 60J80, 60G70. Secondary:  60G42, 60G50.

\section{Introduction}

In the recent years, the asymptotic behaviour of extremal particles in branching processes has been the subject of a large literature. Starting from the results of Bramson \cite{Bra78} and Lalley and Sellke \cite{LaS87} on the asymptotic behaviour of the maximum of branching Brownian motions, the convergence in distribution of the position of the largest atom in branching random walks was obtained by Aïdékon \cite{Aid13}. In both cases, the limiting distribution was identified as a randomly shifted Gumbel distribution.

More generally, the convergence in distribution of the extremal process and the study of the limit has been a recurrent subject of interest in the last decade. This convergence was obtained for branching Brownian motion by Aïdékon et al. \cite{ABBS13} and Arguin et al. \cite{ABK13}. Madaule \cite{Mad17}, relying on \cite{Mai13}, extended this convergence result to branching random walks: the extremal process of the branching random walk converges to a randomly shifted Poisson point process with exponential intensity, decorated by i.i.d. copies of a random point measure. This limiting point process was referred to in \cite{SuZ15} as a \emph{randomly Shifted Decorated Poisson Point Process}, or SDPPP. 
  
The convergence of extremal processes to SDPPP has since been observed in a variety of processes presenting a branching-type structure. Among these results, we can mention
\begin{itemize}
  \item time-inhomogeneous branching Brownian motions \cite{BoH15,MaZ16}, in which the variance of particles at time $s$ depends continuously on $s/t$,
  \item generalized Random Energy Model \cite{ScK15}, in which particles branch for $n^\alpha$ steps before displacing independently for $n^\alpha$ steps,
  \item discrete Gaussian free field on the square \cite{BiL16}, which is a Gaussian field on $\Z^2 \cap [0,N]$ with correlation given by the Green kernel of the random walk killed when hitting the boundary,
  \item catalytic branching Brownian motion \cite{Boch19} in which particles move as i.i.d. Brownian motions but can only branch in a neighbourhood of the origin,
  \item randomly decorated branching random walks \cite{BaP}, in which the extremal process of a branching random walk is decorated by i.i.d. random variables with exponential tail,
  \item reducible multitype branching Brownian motion \cite{BM20}, in which two types of particles are present, particles of type $1$ giving birth to particles of type $2$ but not reciprocally.
\end{itemize}
In all cases the limit of the suitably translated extremal process has been proved to converge to an SDPPP. The aim of this article is to present a unifying theory. We consider random point measures which are in a certain sense stable under the action of branching. We characterize these processes as SDPPP with an explicit random shift. As argued below, such a characterization should allow to streamline proofs of convergence of extremal processes of branching-type systems.

\subsection{Point measures and branching random walks}

We denote by $\mathfrak{P}$ the set of point measures on $\R$ assigning finite mass to the interval $(0,\infty)$. Specifically, $D \in \mathfrak{P}$ if and only if $D$ is a measure on $\R$ with an integer-valued tail, i.e.
\[
  \bar{D}(x) \coloneqq D((x,\infty)) \in \Z_+ \quad \text{ for every } x \in \R.
\]
Given $D \in \mathfrak{P}$ we call the \emph{ranked sequence of atoms} of $D$ the non-increasing sequence $\mathbf{d} = (d_n, n \in \N)$ of the positions of the atoms of $D$, repeated according to their multiplicity. In order to account for finite point measures, we write $d_n = - \infty$ if $D(\R) < n$. In the rest of this article, we canonically identify the point measure $D$ with its ranked sequence of atoms $\mathbf{d}$ via
\[
  D = \sum_{n =1}^\infty \ind{d_n > -\infty}\delta_{d_n} \text{ $\iff$ } \mathbf{d} = \left(\sup\{y \in \R : D((y,\infty)) < n\}, n \in \N \right).
\]
Therefore we canonically identify $\mathfrak{P}$ with the space of non-increasing sequences in $[-\infty,\infty)$ such that $\lim_{n \to \infty} d_n = -\infty$, with the identification $\mathbf{d} = D$. In particular, the null measure is identified with the sequence $(-\infty,-\infty,\cdots)$, and a Dirac mass at point $a \in \R$ with $(a,-\infty,-\infty,...)$.

Given $D \in \mathfrak{P}$ and $f$ a measurable non-negative function, we set
\[
  \crochet{D,f} = \int f \dd D = \sum_{n=1}^\infty \ind{d_n > -\infty} f(d_n).
\]
We also denote by $\tau$ the translation operator on $\mathfrak{P}$, defined for all $y \in [-\infty, \infty)$ via
\[
  \tau_y D = \tau_y \mathbf{d} = (d_n + y, n \in \N).
\]

The law of a random point measure $E$ (i.e. a random element of $\mathfrak{P}$) is characterized by its Laplace functional (see e.g.~\cite[Section~9.4]{DV2008}), defined as $\phi \in \mathfrak{T} \mapsto \E\left( \exp\left( - \crochet{E,\phi} \right) \right)$, where $\mathfrak{T}$ is a large enough set of test functions. In this article, we denote by $\mathfrak{T}$ the set of continuous non-negative bounded functions on $\R$ with support bounded on the left.

A \emph{branching random walk} is a $\mathfrak{P}$-valued stochastic process $(Z_n, n \geq 0)$, starting from $Z_0 = \delta_0$, that satisfies the branching property, i.e. such that for all $n,m$
\begin{equation}
  \label{eqn:branchingProp}
  Z_{n+m} \stackrel{(d)}= \sum_{k=1}^\infty \tau_{z_k} Z^{(k)}_m, \text{ where $(z_1,z_2\ldots)$ is the ranked sequence of atoms of $Z_n$},
\end{equation}
and $(Z^{(k)}_m, k \in \N)$ are i.i.d. copies of $Z_m$, further independent of $Z_n$. The law of the branching random walk can then be described by recurrence using only the law of $Z\coloneqq Z_1$ on $\mathfrak{P}$, that we write $\mathcal{Z}$. A branching random walk can be described as a particle system on $\R$ starting from a single particle at the origin at time $0$. At each generation, every particle creates offspring around its position, according to an independent point measure of law $\mathcal{Z}$, translated by its position. 

Note that in this article, we do not study branching random walks \emph{per se}, rather, they appear as a tool in the study of point measure distributions which are in a sense stable under the action of branching.

To ensure the well-definition of the branching random walk $(Z_n)_{n\ge0}$ (i.e. that $Z_n \in \mathfrak{P}$ for all $n \in \N$ a.s.), one usually further requests the non-degeneracy of the Laplace transform of the intensity measure of $Z$. More precisely, we assume the existence of $\theta > 0$ such that
\begin{equation}
  \label{eqn:integrabilityCondition}
  \kappa(\theta) \coloneqq \log \int \crochet{Z,\exp_\theta} \mathcal{Z}(\dd Z) = \log \E\left( \sum_{j=1}^\infty e^{\theta z_j}\right) < \infty,
\end{equation}
writing $\exp_\theta : z \mapsto e^{\theta z}$. 
Under this assumption and the branching property, one straightforwardly obtains that for all $n \in \N$, $\displaystyle \E\left( \crochet{Z_n,\exp_\theta} \right) = e^{n \kappa(\theta)}$, and
\begin{equation}
  \label{eqn:defineMartingale}
  \left( \crochet{Z_n,\exp_\theta} e^{-n \kappa(\theta)}, n \geq 0 \right) \text{ is a non-negative martingale.}
\end{equation}

The branching property \eqref{eqn:branchingProp} can be written more concisely using the notion of  \emph{branching convolution operation} $\circledast$, which acts on probability distributions on~$\mathfrak{P}$. The branching convolution operation was introduced in \cite{BM19} and some of its properties studied there. This operation is an extension to probability distributions on $\mathfrak{P}$ of the usual convolution operation for measures on $\R$. Given $\mathcal{D}$ and $\mathcal{E}$ two probability distributions on $\mathfrak{P}$, their convolution $\mathcal{D} \circledast \mathcal{E}$ is defined as the law of the point measure
\[
  \sum_{j = 1}^\infty \tau_{d_j} E^{(j)},
\]
where $(d_j, j \geq 1)$ is the ranked sequence of atoms of a point measure of law $\mathcal{D}$ and $(E^{(j)})$ are i.i.d. random point measures with law $\mathcal{E}$.  It is easy to check that the branching convolution operation is associative, and that the law of the Dirac measure $\delta_0$ (i.e.~the measure $\delta_{\delta_0}$) is an identity. Denoting by $\mathcal{Z}_n$ the law of $Z_n$ and by $\mathcal{Z}$ the law of $Z = Z_1$, we then have
\[
\mathcal{Z}_n = \mathcal{Z}^{\circledast n} = \mathcal{Z}_{n-1} \circledast \mathcal{Z} = \mathcal{Z} \circledast \mathcal{Z}_{n-1}.
\]

\paragraph{Notation conventions.}
In this article, we use as much as possible the following typographic convention when dealing with random point measures:
\begin{itemize}
  \item calligraphic capital letters $\mathcal{D},\mathcal{Z},\ldots$ are reserved for laws of a random point measures, i.e. probability distributions on $\mathfrak{P}$;
  \item roman upper case letters $D,Z,\ldots$ are used to represent random point measures with corresponding distribution, i.e. random variables in the space $\mathfrak{P}$ with law $\mathcal{D}$;
  \item roman lower case $d_1, z_2, \ldots$ is used to denote the positions of the largest, second largest, etc. atoms of the point measure $D$;
  \item boldface lower case $\mathbf{d} = (d_n, n \in \N),\mathbf{z},\ldots$ is used for the full ranked sequence of atoms.
\end{itemize}

\subsection{Shifted decorated Poisson point processes}

Given $S$ a non-negative random variable, $\alpha > 0$ and $\mathcal{D}$ a probability distribution on $\mathfrak{P}$, an SDPPP with parameters ($S$, $e^{-\alpha x} \dd x$, $\mathcal{D}$) is constructed as follows. Let $(\xi_j, j \in \N)$ denote the ranked sequence of atoms of a Poisson point process with intensity $e^{-\alpha x} \dd x$ and let $(D_i, i \in \N)$ be i.i.d. random point measures with law $\mathcal{D}$. The SDPPP($S$, $e^{-\alpha x} \dd x$, $\mathcal{D}$) is the random point measure defined as
\begin{equation}
  \label{eqn:SDPPPdef}
  E = \sum_{i =1}^\infty \tau_{\alpha^{-1}\log S + \xi_i} D_i,
\end{equation}
provided that $E \in \mathfrak{P}$ almost surely. Using the branching convolution operation, the law $\mathcal{E}$ of the SDPPP defined in \eqref{eqn:SDPPPdef} can be written as
\begin{equation}
  \label{eqn:branchingDecomp}
\mathcal E = \mathcal S \circledast \mathcal X \circledast \mathcal D,
\end{equation}
where $\mathcal S$ is the law of the random point measure $\delta_{\alpha^{-1}\log S}$ and $\mathcal X$ is the law of the Poisson process $(\xi_j, j \in \N)$. Remark that convolving on the left with the law of a (random) Dirac measure precisely amounts to a shift by the position of the atom of this Dirac measure.
 A systematic study of SDPPP was undergone in \cite{SuZ15}, we recall some of the properties of these random point measures below.

The Laplace functional of an SDPPP $E$ with parameters ($S$, $e^{-\alpha x} \dd x$, ${\mathcal{D}}$) can be written
\begin{equation}
  \label{eqn:LTSDPPP}
  \E\left( e^{-\crochet{E,\phi}} \right) = \E\left( \exp\left( -S \int_\R e^{-\alpha x} (1 - e^{-\Psi_\phi(x)}) \dd x\right) \right),
\end{equation}
where $\displaystyle \Psi_\phi(x) = - \log \E\left( e^{-\crochet{\tau_x D,\phi}}\right)$ and $D$ is a random point measure with law $\mathcal{D}$. Subag and Zeitouni proved in particular that a characteristic property of SDPPP is the existence of a non-increasing function $g$ such that for every continuous non-negative compactly supported function $\phi$, there exists $T_\phi \in \R$ such that
\begin{equation}
  \label{eqn:pointMeasure}
  \forall y \in \R, \quad \E\left( e^{-\crochet{\tau_y E,\phi}} \right) = \E\left( e^{-\crochet{E,\phi(\cdot + y)}} \right) = g(y - T_\phi).
\end{equation}
This result is stated more precisely in Theorem~\ref{thm:SuZ} below. The reader can easily check that for the SDPPP given above, the function $g(y) = \E[\exp(-Se^{\alpha y})]$ satisfies these conditions.

Formula \eqref{eqn:SDPPPdef} is well-defined if and only if the law of $d_1 \eqqcolon \max D$, the position of the largest atom in $D$, has an exponential moment of order $\alpha$, i.e.
\begin{equation}
  \label{eqn:conditionForIntegrability}
  c \coloneqq \E(e^{\alpha d_1}) = \int_{\mathfrak{P}} e^{\alpha \max D} \mathcal{D}(\dd D) < \infty.
\end{equation}
In this case, we define the law $\mathcal{D}^\ast$ on $\mathfrak P$ by
\begin{equation}
  \label{eqn:changeOfMeasure}
  \int_{\mathfrak{P}} f(D) \mathcal{D}^\ast(\dd D) = \frac{\int_{\mathfrak{P}} f(\tau_{-\max D} D) e^{\alpha \max D} \mathcal{D}(\dd D)}{\int_{\mathfrak{P}} e^{\alpha \max D} \mathcal{D}(\dd D)}
\end{equation}
for all measurable bounded functions $f$. Remark that the law $\mathcal{D}^\ast$ is supported by $\displaystyle \mathfrak{P}^\ast \coloneqq \left\{ \x \in \mathfrak{P} : x_1 = 0\right\}$, and $E$ is also an SDPPP($c S$, $e^{-\alpha x} \dd x$, $\mathcal{D}^\ast$), as one readily checks. In other words, we can assume without loss of generality that the law of the decoration of an SDPPP is supported on $\mathfrak{P}^\ast$.

Under some integrability conditions on $S$ (namely \eqref{eqn:regularityLaplace} below), one can recover the law $\mathcal{D}^\ast$ from the law of $E$ via the following formula
\begin{equation}
  \label{eqn:obtainingTheDecoration}
  \int_\mathfrak{P} f(\x) \mathcal{D}^\ast (\dd \x )= \lim_{z \to \infty} \E\left( f\left( \tau_{-\max E} E \right) \middle| \max E \geq z \right),
\end{equation}
where $\max E$ is the position of the largest atom in $E$. Additionally,
\begin{equation}
  \label{eqn:lawMax}
  \P(\max E \leq z) = \P\left( cS + \max_{j \in \N} \xi_j \leq x \right) = \E\left( \exp\left( - c S e^{-\alpha z} \right) \right),
\end{equation}
using that $\xi_1 = \max_{j \in \N} \xi_j$ follows the Gumbel distribution. As a result, given the value of $\alpha$, both the law of $cS$ and the law of $\mathcal{D}^\ast$ can be identified from the law of $E$.

\subsection{Fixed points of a branching convolution equation}
\label{subsec:13}

In this section, we introduce branching convolution equations and state the main result of this article.
Writing $(z_n, n \geq 1)$ for the ranked sequence of atoms of $Z$, we assume there exists $\alpha > 0$ such that
\begin{equation}
  \tag{A1}
  \label{eqn:regularityAssumption}
  \E\left( \sum_{j =1}^\infty \ind{z_j >-\infty} \right) > 1 \quad \text{and} \quad \E\left( \sum_{j=1}^\infty e^{\alpha z_j} \right) = 1.
\end{equation}
The first condition ensures that $\P(\forall n \in \N, Z_n \neq 0) > 0$, i.e. the survival of the branching random walk with positive probability, and the second one that the process $(\crochet{Z_n,\exp_\alpha}, n \geq 1)$ is a non-negative martingale, that we refer to as the additive martingale. Next, we assume one of the two following conditions holds
\begin{equation}
  \tag{A2a}
  \label{eqn:regularCase}
  \E\left( \sum_{j=1}^\infty z_j e^{\alpha z_j}\right) \in (-\infty,0) \quad \text{and} \quad \E\left( \crochet{Z,\exp_\alpha} \log_+ \crochet{Z,\exp_\alpha} \right) < \infty,
\end{equation}
\begin{multline}
  \tag{A2b}
  \label{eqn:boundaryCase}
  \qquad \qquad \E\left( \sum_{j=1}^\infty z_j e^{\alpha z_j}\right) = 0, \quad \E\left(  \sum_{j=1}^\infty z_j^2 e^{\alpha z_j} \right) < \infty \\ \text{and} \quad \E\left( \crochet{Z,\exp_\alpha} \log_+ \crochet{Z,\exp_\alpha}^2 \right) + \E(\tilde{X}\log_+\tilde{X}) < \infty,\qquad \qquad 
\end{multline}
where $\tilde{X} = \sum_{j=1}^\infty (z_j)_+ e^{\alpha z_j}$. If assumption \eqref{eqn:regularCase} holds, we say that we are in the regular case, in opposition with the boundary case when \eqref{eqn:boundaryCase} holds, following the terminology of \cite{BiK05}.

Under assumption \eqref{eqn:regularCase}, the martingale $\crochet{Z_n,\exp_\alpha}$ is uniformly integrable and converges a.s. to a non-degenerate limit that we write $S$ (see Biggins \cite{Big77}, Lyons \cite{Lyo97}), whereas under assumption \eqref{eqn:boundaryCase}, it converges to $0$ almost surely. In that situation, writing $f : x \mapsto x e^{\alpha x}$, the process $(\crochet{Z_n,f})$ is a (signed, non-uniformly integrable) martingale converging almost surely to a non-degenerate, non-negative limit that we also write $S$ (see \cite{Aid13}, \cite{Che15}). This martingale is called the derivative martingale. We observe that in both cases, the variable $S$ satisfies
\begin{equation}
  \label{eqn:fixedPointEquation}
  S \egaldistr \sum_{j=1}^\infty e^{\alpha z_j} S^{(j)},
\end{equation}
where $(S^{(j)}, j \in \N)$ are i.i.d. copies of $S$ that are further independent of $Z$.

In the rest of the article, we assume
\begin{equation}
  \tag{A2}
  \label{eqn:bothCase}
  \text{\eqref{eqn:regularCase} or \eqref{eqn:boundaryCase} holds,}
\end{equation}
and we write $S$ the limit of the additive or the derivative martingale depending on whether we are in the regular or the boundary case. Finally, we assume the branching random walk to be non-lattice, i.e.
\begin{equation}
  \tag{A3}
  \label{eqn:nonLattice}
  \forall a > 0, \ \forall b \in \R, \ \P(\forall j \in \N, z_j \in a \Z+ b) < 1.
\end{equation}

Under assumptions \eqref{eqn:regularityAssumption}, \eqref{eqn:bothCase} and \eqref{eqn:nonLattice} we take interest in random point measures $E$ satisfying the following equality in distribution
\begin{equation}
  \label{eqn:stablePointMeasure}
  E \stackrel{(d)}= \sum_{j =1}^\infty \tau_{z_j} E^{(j)},
\end{equation}
where $(E^{(j)}, j \in \N)$ are i.i.d. copies of $E$ further independent of $\mathbf{z}= Z$.

Then, denoting by $\mathcal{Z}$ the law of the first generation of the branching random walk, we can reformulate the problem of identifying the solutions $E$ to the equation in law \eqref{eqn:stablePointMeasure} as the identification of probability measures $\mathcal{E}$ on $\mathfrak{P}$ satisfying
\begin{equation}
  \label{eqn:stableLaws}
  \mathcal{E} = \mathcal{Z} \circledast \mathcal{E}.
\end{equation}

The main result of the article is the following characterization of the solutions of this fixed point equation.
\begin{theorem}
\label{thm:main}
Under assumptions \eqref{eqn:regularityAssumption}, \eqref{eqn:bothCase} and \eqref{eqn:nonLattice}, a random point measure $E$ satisfies \eqref{eqn:stablePointMeasure} if and only if there exist $c>0$ and a probability distribution $\mathcal{D}^\ast$ on $\mathfrak{P}^\ast$ such that $E$ is a SDPPP($cS$, $e^{-\alpha x}\dd x$, $\mathcal{D}^\ast$). 
\end{theorem}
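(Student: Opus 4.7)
I would split the argument into the two directions of the equivalence, with the bulk of the work going into the ``only if'' direction via the smoothing transform.

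For the easy direction, take $E \sim \mathrm{SDPPP}(cS, e^{-\alpha x}\dd x, \mathcal D^\ast)$ and i.i.d.\ copies $(E^{(j)})$ of $E$ independent of $Z$. Using the decomposition~\eqref{eqn:branchingDecomp}, each translate $\tau_{z_j} E^{(j)}$ is itself an SDPPP with intensity $e^{-\alpha x}\dd x$ and decoration $\mathcal D^\ast$, but with shift $cS^{(j)}e^{\alpha z_j}$. Superposition of Poisson point processes then identifies $\sum_j \tau_{z_j} E^{(j)}$ as an SDPPP with the same intensity and decoration, and shift $c\sum_{j\ge 1} e^{\alpha z_j}S^{(j)}$. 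By the fixed-point equation~\eqref{eqn:fixedPointEquation} satisfied by the (additive or derivative) martingale limit $S$, this shift is equal in law to $cS$, so \eqref{eqn:stablePointMeasure} holds.

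For the converse, fix $E$ satisfying~\eqref{eqn:stablePointMeasure} and, for each $\phi \in \mathfrak T$, introduce
$$ G_\phi(y) \coloneqq \E\bigl[e^{-\crochet{\tau_y E,\phi}}\bigr], \qquad y \in \R. $$
Since $\phi$ is bounded with support bounded on the left and $E \in \mathfrak P$ almost surely, $G_\phi$ is continuous in $y$ with $G_\phi(y)\to 1$ as $y\to -\infty$. Taking Laplace transforms in~\eqref{eqn:stablePointMeasure} and conditioning on $Z$ using the independence of the copies $E^{(j)}$ yields the multiplicative fixed-point equation
$$ G_\phi(y) = \E\!\left[\prod_{j\ge 1} G_\phi(y + z_j)\right],\qquad y \in \R, $$
so that $G_\phi$ is a $[0,1]$-valued solution of the smoothing transform associated with $\mathcal Z$. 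Under assumptions~\eqref{eqn:regularityAssumption}, \eqref{eqn:bothCase} and \eqref{eqn:nonLattice} I would invoke the Biggins--Kyprianou characterisation~\cite{BiK05} of such solutions to conclude
$$ G_\phi(y) = \E\bigl[\exp(-c_\phi\, S\, e^{\alpha y})\bigr] $$
for a unique constant $c_\phi \ge 0$. This shows that the Laplace functional of $\tau_y E$ depends on $y$ only through the scalar $e^{\alpha y}$ multiplying $c_\phi S$, which is precisely the structural property~\eqref{eqn:pointMeasure}. Theorem~\ref{thm:SuZ} then forces $E$ to be an SDPPP with shift proportional to $S$ and intensity $e^{-\alpha x}\dd x$; formulas~\eqref{eqn:lawMax} and~\eqref{eqn:obtainingTheDecoration}, applied to $\max E$, recover respectively the constant $c$ and the decoration law $\mathcal D^\ast$ on $\mathfrak P^\ast$.

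The main obstacle is the smoothing-transform step. Applying~\cite{BiK05} cleanly requires checking its integrability hypotheses separately in the regular regime~\eqref{eqn:regularCase} (where $S$ is the additive-martingale limit) and the boundary regime~\eqref{eqn:boundaryCase} (where $S$ is the derivative-martingale limit, with its logarithmic correction), and ruling out non-exponential monotone solutions of the smoothing equation — this is precisely where the non-lattice assumption~\eqref{eqn:nonLattice} is used. An additional subtlety is that $G_\phi$ need not be monotone in $y$ for an arbitrary $\phi \in \mathfrak T$, so one may have to work directly with the multiplicative martingale $\prod_{|v|=n} G_\phi(V(v))$ along the branching random walk $\mathbf z$, prove its almost-sure and $L^1$ convergence, and identify its limit as $\exp(-c_\phi S e^{\alpha y})$ via a uniqueness argument, rather than citing \cite{BiK05} off the shelf.
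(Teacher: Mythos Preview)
Your overall strategy matches the paper's: derive a smoothing-transform equation for the Laplace functional $G_\phi$, identify its solutions, then invoke the Subag--Zeitouni characterisation (Theorem~\ref{thm:SuZ}). The sufficiency direction is essentially identical to the paper's.

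However, you correctly flag but do not resolve the central obstacle: $G_\phi$ is not monotone for general $\phi \in \mathfrak T$, so the smoothing-transform classification cannot be cited off the shelf. Your suggested workaround via the multiplicative martingale $\prod_{|v|=n} G_\phi(V(v))$ is too vague to count as a proof --- identifying its limit as $\exp(-c_\phi S e^{\alpha y})$ is precisely the hard part, and you give no mechanism for this. The paper fills the gap by a two-stage bootstrap. First, for \emph{non-decreasing} $\phi \in \mathfrak T$, monotonicity of $G_\phi$ is immediate and the smoothing-transform result (Theorem~\ref{thm:ABM}, the version from \cite{ABM12} rather than \cite{BiK05}, which covers both regimes \eqref{eqn:regularCase} and \eqref{eqn:boundaryCase}) applies directly. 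This already yields the law of $\max E$ and, by taking limits in the resulting one-parameter family, the tightness of the conditional law of $E((z,\infty))$ given $\max E > z$ as $z \to \infty$ (Corollary~\ref{cor:conv}). Second, for compactly supported $\phi$, this tightness together with the uniform continuity of $\phi$ is used to show that $z \mapsto -\log\bigl(1 - G_\phi(-z)\bigr)$ is uniformly continuous on some $[K,\infty)$; this is the \emph{alternative} hypothesis in Theorem~\ref{thm:ABM}, which then applies again and gives $G_\phi(y) = g(y - T_\phi)$ for all compactly supported $\phi$.

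A second, smaller gap: to apply Theorem~\ref{thm:SuZ} you must also verify the regular-variation condition \eqref{eqn:21} on $g(x) = \E\bigl[\exp(-S e^{\alpha x})\bigr]$. This is not automatic. In case \eqref{eqn:regularCase} it follows from $\E[S]=1$ (uniform integrability of the additive martingale), but in the boundary case \eqref{eqn:boundaryCase} one needs the asymptotic $1 - g(z) \sim -\alpha z e^{\alpha z}$ as $z \to -\infty$, which is a separate result on the derivative martingale (Theorem~\ref{thm:BIM}, from \cite{BIM}).
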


Equivalently, Theorem~\ref{thm:main} can be reformulated as follows: under assumptions \eqref{eqn:regularityAssumption}, \eqref{eqn:bothCase} and \eqref{eqn:nonLattice} a distribution $\mathcal{E}$ on $\mathfrak{P}$ satisfies \eqref{eqn:stablePointMeasure} if and only if there exists a positive constant $c$ and a probability distribution $\mathcal{D}^*$ on $\mathfrak{P}^\ast$, such that $\mathcal{E} = \mathcal{S}_c \circledast \mathcal{X} \circledast \mathcal{D}^*$, where $\mathcal{S}_c$ is the law of the random point measure $\delta_{\alpha^{-1}\log(cS)}$ and $\mathcal{X}$ is the law of a Poisson process with intensity $e^{-\alpha x}\dd x$.

\begin{remark}
\label{rem:branchingAfter}
Note that if $\mathcal{E}$ is a point measure distribution satisfying \eqref{eqn:stableLaws}, then for any probability distribution $\mathcal{F}$ on $\mathfrak{P}^\ast$, the point measure defined as $\bar{\mathcal{E}} = \mathcal{E} \circledast \mathcal{F}$ also satisfies \eqref{eqn:stableLaws}. As a result, satisfying \eqref{eqn:stableLaws} alone does not characterize $\mathcal{D}^\ast$.
\end{remark}

Theorem \ref{thm:main} can be used to characterize the law of the limit of the extremal point measure of a branching random walk, provided that its convergence was already proved. Using the branching property \eqref{eqn:branchingProp} at the first generation, i.e. with $m \in \N$ and $n = 1$, then letting $m \to \infty$, we deduce that the limiting point measure has to satisfy \eqref{eqn:stablePointMeasure}. For example, if the extremal process of the time-inhomogeneous branching Brownian motion studied in \cite{MaZ16} were to converge, the limit would have to be an SDPPP, with the normalization constant $c>0$ and the decoration law $\mathcal{D}$ to be determined.

We conjecture that applying the branching property at its last step, i.e. for $m=1$ and $n \in \N$ would allow to complete the characterization of the SDPPP by identifying the limiting point measure. Hence, we are interested in solutions of the following dual fixed point equation:
\begin{equation}
\label{eqn:other_equation}
  \mathcal{E} = \mathcal{E} \circledast \mathcal{Z},
\end{equation}
which can be written as
\[
E \egaldistr \sum_{i = 1}^\infty \sum_{j=1}^\infty \delta_{e_i + z^i_j},
\]
where $\mathbf{e}$ is the ranked sequence of atoms of $E$, a point measure of law $\mathcal{E}$, and $(\mathbf{z}^i, i \in \N)$ are the ranked sequences of atoms of i.i.d. copies of $Z$.

\begin{conjecture}
\label{conj:WouldBeCool}
Assume \eqref{eqn:regularityAssumption} and \eqref{eqn:nonLattice} and that either \eqref{eqn:boundaryCase} or
\begin{equation}
  \tag{A2c}
  \label{eqn:superCriticalCase}
  \E\left( \sum_{j=1}^\infty z_j e^{\alpha z_j}\right) \in (0,\infty) \quad \text{and} \quad \E\left( \crochet{Z,\exp_\alpha} \log_+ \crochet{Z,\exp_\alpha} \right) < \infty.
\end{equation}
There exists a law $\mathcal{D}^\ast$ on $\mathfrak{P}^\ast$, such that the following holds: a probability distribution $\mathcal{E}$ on $\mathfrak{P}$ satisfies \eqref{eqn:other_equation} if and only if there exists a non-negative random variable $\tilde{S}$ such that $\mathcal{E}$ is the law of an SDPPP($\tilde{S}$, $e^{-\alpha x}\dd x$, $\mathcal{D}^\ast$).
\end{conjecture}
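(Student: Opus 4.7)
The plan is to iterate the fixed-point equation and combine it with the known asymptotics of the extremal process of the branching random walk. Iterating \eqref{eqn:other_equation} yields $\mathcal E = \mathcal E \circledast \mathcal Z_n$ for every $n \in \N$, so one may couple $E$ of law $\mathcal E$ with an i.i.d.\ family $(Z_n^{(i)})_{i \in \N}$ of copies of $Z_n$, independent of $E$, with $E \egaldistr \sum_i \tau_{e_i} Z_n^{(i)}$. The main external input is the extremal-process convergence: in case \eqref{eqn:boundaryCase}, Madaule's theorem \cite{Mad17} gives $\tau_{a_n} Z_n \xrightarrow{(d)} \mathrm{SDPPP}(c_* S, e^{-\alpha x}\dd x, \mathcal D^*)$ as $n \to \infty$, where $a_n = \frac{3}{2\alpha}\log n$, $S$ is the limit of the derivative martingale, and $\mathcal D^*$ is a distinguished decoration law on $\mathfrak P^*$. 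An analogous statement with $a_n$ linear in $n$ and $S$ the additive-martingale limit is expected in case \eqref{eqn:superCriticalCase}, but must be proved as a prerequisite.

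I would then compute the Laplace functional of $E$ by passing to the limit in the iterated equation. Fix $\phi \in \mathfrak T$ and set $\Psi_\phi(x) = -\log \E[e^{-\crochet{\tau_x D,\phi}}]$ with $D \sim \mathcal D^*$, and $C_\phi = \int_\R e^{-\alpha x}(1-e^{-\Psi_\phi(x)})\dd x$. Conditional independence of the $Z_n^{(i)}$ given $E$ gives
\[
F_E(\phi) = \E\left[\prod_{i} F_{Z_n}\bigl(\phi(\cdot + e_i)\bigr)\right], \qquad F_{Z_n}(\psi) \coloneqq \E\bigl[e^{-\crochet{Z_n,\psi}}\bigr].
\]
Using the SDPPP limit, each factor $F_{Z_n}(\phi(\cdot + e_i))$ should be approximately $\E[\exp(-c_* S^{(i)} e^{\alpha(e_i - a_n)} C_\phi)]$ with $S^{(i)}$ i.i.d.\ copies of $S$, so by conditional independence the product converges, as $n \to \infty$, to $\E[\exp(-c_* C_\phi \tilde S) \mid E]$ for a suitable non-negative random variable $\tilde S$ built from the $(e_i)$ and the $(S^{(i)})$. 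Taking expectations yields $F_E(\phi) = \E[\exp(-c_* C_\phi \tilde S)]$, which by \eqref{eqn:LTSDPPP} identifies $\mathcal E$ as $\mathrm{SDPPP}(c_* \tilde S, e^{-\alpha x}\dd x, \mathcal D^*)$ and proves the conjecture.

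The crux of the argument, and likely the main obstacle, lies in making this limit passage rigorous when $E$ has infinitely many atoms. A quantitative form of the extremal convergence is required so that the SDPPP-approximation of $F_{Z_n}$ can be inserted uniformly over $i$: each factor is close to $1$ but the product must remain bounded away from $1$, so the errors must be controlled tightly (note that, because the heavy-tailed behaviour of $S$ in the boundary case forbids a naive first-order expansion $\log \E[\exp(-Sy)] \sim -y\E[S]$, the natural candidate for $\tilde S$ will likely be a derivative-martingale-like series $\sum_i |e_i| e^{\alpha e_i} S^{(i)}$ or a renormalization thereof). Establishing existence and non-degeneracy of this $\tilde S$ should rely on \eqref{eqn:other_equation} itself to derive a self-similar identity analogous to \eqref{eqn:fixedPointEquation}. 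Finally, case \eqref{eqn:superCriticalCase} requires the BRW extremal convergence with the same universal decoration $\mathcal D^*$ as in the boundary case; this statement does not appear to be available in the literature in the required generality, and will likely demand substantial work along the lines of \cite{Mad17,Aid13,Che15}, including a non-trivial identification of the decoration across the two regimes.
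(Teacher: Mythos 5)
The statement you are trying to prove is labelled Conjecture~\ref{conj:WouldBeCool} in the paper and is \emph{not} proved there: the authors explicitly cite Chen, Garban and Shekhar~\cite{CGS20} for the only available proof, valid for branching Brownian motion with drift under the analogue of \eqref{eqn:boundaryCase}, and note that the adaptation to branching random walks and to the case \eqref{eqn:superCriticalCase} remains open (only partial results of Kabluchko~\cite{Kab} exist in the latter regime). There is therefore no internal proof to measure your attempt against; you are outlining an attack on an open problem.

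That said, the strategy you propose --- iterate $\mathcal E = \mathcal E \circledast \mathcal Z_n$, appeal to the extremal-process convergence $\tau_{a_n}Z_n \to \mathrm{SDPPP}(c_*S,e^{-\alpha x}\dd x,\mathcal D^*)$, and pass to the limit in $F_E(\phi)=\E\bigl[\prod_i F_{Z_n}(\phi(\cdot+e_i))\bigr]$ --- is essentially the one used in~\cite{CGS20}, and the obstacles you flag are the genuine ones. Two additional caveats. First, your candidate $\tilde S = \sum_i |e_i|e^{\alpha e_i}S^{(i)}$ diverges almost surely when $(e_i)$ has the Poissonian profile expected of an SDPPP: the expected contribution of atoms in $[-M-1,-M]$ to the sum is of order $M$, so the series is infinite. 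The required object must therefore be a derivative-martingale-type renormalized limit, and proving its existence and non-degeneracy directly from \eqref{eqn:other_equation} is not a formality but arguably the crux, analogous to what \cite{Aid13,Che15} do for the BRW. Second, the conjecture fixes one of \eqref{eqn:boundaryCase}, \eqref{eqn:superCriticalCase} and asks for a decoration law $\mathcal D^*$ in \emph{that} regime; it does not require the decoration to agree across regimes, so the concern you raise about ``identifying the decoration across the two regimes'' is a misreading of the statement. In sum, your text is a reasonable outline of a programme with the main open steps correctly identified, but --- consistent with the hedges you yourself place on it --- it is not a proof, and the conjecture remains open in the generality stated.
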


Equivalently, this conjecture states that any point measure distribution $\mathcal{E}$ satisfying $\mathcal{E} = \mathcal{E} \circledast \mathcal{Z}$ can be factorized as $\mathcal{E} = \tilde{\mathcal{S}}\circledast\mathcal{X} \circledast \mathcal{D}^\ast$, with $\mathcal{D}^\ast$ uniquely defined, $\mathcal{X}$ the law of a Poisson process with intensity $e^{-\alpha x}\dd x$, and $\tilde{\mathcal S}$ the law of some random shift. If the conjecture holds, then combined with Theorem~\ref{thm:main}, a point measure distribution such that $\mathcal{E} = \mathcal{E} \circledast \mathcal{Z}_1 = \mathcal{Z}_2 \circledast \mathcal{E}$ will be uniquely defined, up to translation by a constant.

The conjecture is consistent with the convergence results obtained by Bovier and Hartung \cite{BoH15} and Maillard and Zeitouni \cite{MaZ16} for time-inhomogeneous branching Brownian motions. In \cite{BoH15}, the limiting point measure satisfies $\mathcal{E} = \mathcal{E} \circledast \mathcal{Z}_1 = \mathcal{Z}_2 \circledast \mathcal{E}$ with $\mathcal{Z}_2$ verifying \eqref{eqn:regularCase} and $\mathcal{Z}_1$ verifying \eqref{eqn:superCriticalCase}. In \cite{MaZ16} the candidate limit for the extremal process would satisfy similar equalities with both $\mathcal{Z}_1$ and $\mathcal{Z}_2$ satisfying \eqref{eqn:boundaryCase}.

Conjecture \ref{conj:WouldBeCool} has recently been proved by \cite{CGS20} for branching Brownian motions with drift, under the analogue of assumption \eqref{eqn:boundaryCase}. They discuss in this article the possible adaptation of the same techniques to branching random walk settings. Kabluchko \cite{Kab} obtained convergence results for branching random walks satisfying assumption \eqref{eqn:superCriticalCase} starting from an initial condition given by a Poisson process with exponential intensity.

\section{Proof of Theorem \ref{thm:main}}

In the rest of the article, $S$ will stand from the limit of the additive or the derivative martingale, depending on whether \eqref{eqn:regularCase} or \eqref{eqn:boundaryCase} hold, as in Section~\ref{subsec:13}. The proof of Theorem \ref{thm:main} is decomposed into two largely independent parts: we first show that for all $c > 0$ and $\mathcal{D}$ a probability distribution on $\mathfrak{P}^*$, the SDPPP($cS$, $e^{-\alpha x}\dd x$, $\mathcal{D}$) is a solution to \eqref{eqn:stablePointMeasure}, by straightforward computation of its Laplace transform. Then, we show that any solution to \eqref{eqn:stablePointMeasure} can be decomposed as a SDPPP, using their characterization in \cite{SuZ15}.

\subsection{Theorem \ref{thm:main}: Sufficiency}

As a first step towards the proof of Theorem~\ref{thm:main}, we observe that a randomly shifted Poisson point process is a solution to \eqref{eqn:stablePointMeasure}.
\begin{lemma}
\label{lem:existence}
Under assumptions \eqref{eqn:regularityAssumption}, \eqref{eqn:bothCase} and \eqref{eqn:nonLattice}, for all $c > 0$ the law of the Cox process $E$ with intensity $c S e^{-\alpha x} \dd x$ verifies \eqref{eqn:stablePointMeasure}.
\end{lemma}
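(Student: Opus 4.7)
The plan is to verify the equality in distribution \eqref{eqn:stablePointMeasure} by matching Laplace functionals of both sides on the test class $\mathfrak{T}$, the central input being the fixed-point equation \eqref{eqn:fixedPointEquation} for $S$. For $\phi\in\mathfrak{T}$, I would introduce the shorthand
\[
  F(\phi) = \int_\R \bigl(1-e^{-\phi(x)}\bigr) e^{-\alpha x}\dd x, \qquad L(t) = \E\bigl(e^{-tS}\bigr).
\]
Boundedness of $\phi$ and left-boundedness of its support make $F(\phi)$ finite, and since $S<\infty$ a.s. the random measure $cSe^{-\alpha x}\dd x$ has finite mass on every interval $(x,\infty)$, so the Cox process $E$ indeed lives in $\mathfrak{P}$. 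Conditioning on $S$ and using the standard Poisson Laplace formula gives at once
\[
  \E\bigl(e^{-\crochet{E,\phi}}\bigr) = \E\bigl(\exp(-cSF(\phi))\bigr) = L\bigl(cF(\phi)\bigr).
\]

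For the right-hand side of \eqref{eqn:stablePointMeasure}, let $(E^{(j)})_{j\ge 1}$ be i.i.d.\ copies of $E$, independent of $Z$; each $E^{(j)}$ is a Cox process directed by $cS^{(j)}e^{-\alpha x}\dd x$ with the $S^{(j)}$ i.i.d.\ copies of $S$. The elementary translation identity obtained by the change of variable $y=x+z_j$,
\[
  F\bigl(\phi(\cdot+z_j)\bigr) = \int_\R (1-e^{-\phi(x+z_j)}) e^{-\alpha x}\dd x = e^{\alpha z_j}F(\phi),
\]
together with conditional independence of the $E^{(j)}$ given $Z$, yields
\[
  \E\Bigl(e^{-\crochet{\sum_j \tau_{z_j}E^{(j)},\phi}} \,\Big|\, Z\Bigr) = \prod_{j\ge 1} L\bigl(ce^{\alpha z_j}F(\phi)\bigr).
\]

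The crux is then to invoke \eqref{eqn:fixedPointEquation}: taking Laplace transforms of the identity $S\egaldistr \sum_j e^{\alpha z_j}S^{(j)}$ at argument $t$ and specializing to $t=cF(\phi)$ yields
\[
  L\bigl(cF(\phi)\bigr) = \E\Bigl(\prod_{j\ge 1} L\bigl(cF(\phi)\,e^{\alpha z_j}\bigr)\Bigr),
\]
which, combined with the two previous displays, is exactly the desired equality of Laplace functionals; since $\mathfrak{T}$ is distribution-determining on $\mathfrak{P}$, the laws coincide. There is no real obstacle here: the argument is purely algebraic, resting on the scaling $F(\phi(\cdot+z))=e^{\alpha z}F(\phi)$ and on the fact that \eqref{eqn:fixedPointEquation} passes to Laplace transforms. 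The only point requiring mild care is to verify $F(\phi)<\infty$ for $\phi\in\mathfrak{T}$, which is precisely the reason test functions are taken with support bounded on the left.
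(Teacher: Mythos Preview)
Your proof is correct and follows essentially the same approach as the paper. The paper's argument invokes the superposition property of Poisson point processes to identify $\sum_i \tau_{z_i} E^{(i)}$ as a Cox process with intensity $c e^{-\alpha x}\bigl(\sum_i S^{(i)} e^{\alpha z_i}\bigr)\dd x$ and then appeals to \eqref{eqn:fixedPointEquation}; your computation of Laplace functionals, together with the scaling identity $F(\phi(\cdot+z))=e^{\alpha z}F(\phi)$, is precisely the explicit verification of that superposition step.
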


\begin{proof}
This result is a simple consequence of the superposability property of Poisson point process. Given $(E^{(i)}, i \in \N)$ i.i.d. copies of $E$ with intensity $S^{(i)}e^{-\alpha x} \dd x$, we observe that $\sum_{i=1}^\infty \tau_{z_i} E^{(i)}$ is a Cox process with intensity $\sum_{i=1}^\infty c S^{(i)} e^{-\alpha (x - z_i)} \dd x = c e^{-\alpha x}\left( \sum S^{(i)} e^{\alpha z_i} \right) \dd x$. Then as $S$ satisfies \eqref{eqn:fixedPointEquation}, the proof is complete.
\end{proof}

As a corollary, we obtain that any SDPPP($c S$, $e^{-\alpha x}\dd x$, $\mathcal{D}$) satisfies \eqref{eqn:stablePointMeasure}.
\begin{corollary}
\label{cor:existence}
Let $\mathcal{D}$ be a random point measure distribution on $\mathfrak{P}^\ast$ and $c > 0$. Under assumptions \eqref{eqn:regularityAssumption}, \eqref{eqn:bothCase} and \eqref{eqn:nonLattice}, an SDPPP($c S$, $e^{-\alpha x}\dd x$, $\mathcal{D}$) satisfies \eqref{eqn:stablePointMeasure}.
\end{corollary}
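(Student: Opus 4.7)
The plan is to reduce the corollary to Lemma \ref{lem:existence} by exploiting the associativity of the branching convolution, which was recalled after \eqref{eqn:branchingDecomp}. The idea is that an SDPPP is, by definition, just a branching convolution of a Cox process with the decoration law, so stability under $\mathcal{Z}\circledast \cdot$ is inherited from the stability of the Cox process.

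More precisely, let $\mathcal{E}$ denote the law of the SDPPP($cS$, $e^{-\alpha x}\dd x$, $\mathcal{D}$). According to the factorization \eqref{eqn:branchingDecomp}, we may write
\[
  \mathcal{E} = \mathcal{C} \circledast \mathcal{D},
\]
where $\mathcal{C} = \mathcal{S}_c \circledast \mathcal{X}$ is precisely the law of the Cox process with (random) intensity $cS\,e^{-\alpha x}\dd x$. Lemma \ref{lem:existence} tells us exactly that $\mathcal{C}$ is a solution of \eqref{eqn:stableLaws}, i.e.\ $\mathcal{Z}\circledast\mathcal{C} = \mathcal{C}$.

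It then suffices to invoke the associativity of $\circledast$:
\[
  \mathcal{Z} \circledast \mathcal{E} \;=\; \mathcal{Z} \circledast (\mathcal{C} \circledast \mathcal{D}) \;=\; (\mathcal{Z} \circledast \mathcal{C}) \circledast \mathcal{D} \;=\; \mathcal{C} \circledast \mathcal{D} \;=\; \mathcal{E},
\]
which is precisely \eqref{eqn:stableLaws} for $\mathcal{E}$, equivalent to \eqref{eqn:stablePointMeasure} for a random point measure of law $\mathcal{E}$. (This is also the content of Remark \ref{rem:branchingAfter}, applied with $\mathcal{F} = \mathcal{D}$ and the solution $\mathcal{C}$.)

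There is no real obstacle to overcome here; the only thing worth checking is that $\mathcal{E} \in \mathfrak{P}$ almost surely so that the SDPPP is well-defined, but this is guaranteed by the integrability assumption \eqref{eqn:conditionForIntegrability}, which is implicit in the statement since $\mathcal{D}$ is supported on $\mathfrak{P}^\ast$ (so $\max D = 0$ and the condition is trivially satisfied). The proof is thus essentially a one-line associativity argument.
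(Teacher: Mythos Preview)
Your argument is correct and essentially identical to the paper's own proof: the paper also writes the SDPPP law as $\mathcal{P}\circledast\mathcal{D}$ with $\mathcal{P}$ the Cox process law, invokes Lemma~\ref{lem:existence} for $\mathcal{P}=\mathcal{Z}\circledast\mathcal{P}$, and concludes by associativity, explicitly citing Remark~\ref{rem:branchingAfter}. The only cosmetic difference is notation ($\mathcal{P}$ versus your $\mathcal{C}$).
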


\begin{proof}
It is a direct consequence of Remark \ref{rem:branchingAfter}. Denote by $\mathcal{P}$ the law of a Cox process with intensity $c S e^{-\alpha x} \dd x$ and ${\mathcal{E}}$ the law of an SDPPP($c S$, $e^{-\alpha x}\dd x$, $\mathcal{D}$), we have ${\mathcal{E}} = \mathcal{P} \circledast \mathcal{D}$ by \eqref{eqn:branchingDecomp}. Lemma~\ref{lem:existence} shows that $\mathcal{P} = \mathcal{Z} \circledast \mathcal{P}$. Hence, we deduce that $\mathcal{Z} \circledast {\mathcal{E}} = \mathcal{Z} \circledast \mathcal{P} \circledast \mathcal{D} = \mathcal{P} \circledast \mathcal{D} = {\mathcal{E}}$ by associativity of $\circledast$.
\end{proof}

\subsection{Theorem~\ref{thm:main}: Necessity}

We consider in this section a random point measure $E$ satisfying \eqref{eqn:stablePointMeasure}. For any test function $\phi \in \mathfrak{T}$, we introduce the function
\[
  F_\phi : x \in \R \mapsto \E\left( e^{-\crochet{\tau_{x} E, \phi}} \right).
\]
We recall here the following consequence of \cite[Theorem 9(a)]{SuZ15}.
\begin{alttheorem}
\label{thm:SuZ}
Assume that for every compactly supported measure $\phi \in \mathfrak{T}$, there exists $T_\phi \in \R$ such that $\displaystyle F_\phi(x) = g(x - T_\phi)$, where 
\[
  g : x \in \R \mapsto \E\left( \exp\left( - W e^{\beta x} \right) \right),
\]
for some $\beta > 0$ and $W$ a non-negative random variable. Assume that
\begin{equation}
  \label{eqn:21}
  \forall y \in \R, \lim_{x \to -\infty} \frac{1 - g(x+y)}{1-g(x)} = e^{\beta y}.
\end{equation}
Then, there exists $c>0$ such that $E$ is an $SDPPP(c W, e^{-\beta x}\dd x, \mathcal{D}^\ast)$, where $\mathcal{D}^\ast$ is the limit law constructed using the formula \eqref{eqn:obtainingTheDecoration}.
\end{alttheorem}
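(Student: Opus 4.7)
The plan is to deduce the SDPPP structure from the functional equation $F_\phi(x) = g(x - T_\phi)$ by constructing the decoration $\mathcal{D}^\ast$ via the conditional limit formula \eqref{eqn:obtainingTheDecoration} and then matching Laplace functionals. Two ingredients from the hypothesis drive the argument: the explicit Gumbel--Cox form $g(x) = \E[\exp(-We^{\beta x})]$, which prescribes an underlying Poissonian scaffold with intensity proportional to $e^{-\beta z}\,\dd z$ modulated by the nonnegative random variable $W$; and the regularity \eqref{eqn:21}, which ensures that $1 - g(x)$ genuinely behaves like a constant times $e^{\beta x}$ as $x \to -\infty$, without any slowly varying correction.

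First, I would use \eqref{eqn:21} to identify how $T_\phi$ depends on $\phi$. Writing $1 - F_\phi(x) = (1 - g(x))\,e^{-\beta T_\phi}(1 + o(1))$ as $x \to -\infty$ and comparing with the first-moment heuristic $1 - F_\phi(x) \sim \E[\crochet{\tau_x E,\phi}]$, valid in the regime where only the topmost atoms of $E$ contribute to $\crochet{\tau_x E,\phi}$, one should obtain that $e^{-\beta T_\phi}$ is a linear integral functional of $\phi$ against a measure of the form $c\,e^{-\beta z}\,\dd z$ for some universal $c > 0$. Second, I would construct $\mathcal{D}^\ast$ via the conditional limit in \eqref{eqn:obtainingTheDecoration}, establishing its existence by evaluating $\E[\exp(-\crochet{\tau_{-\max E} E,\phi}) \mid \max E \geq z]$ using the functional equation applied to suitable test functions encoding both $\phi$ and a smoothed indicator of a half-line above $z$. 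Third, setting $\Psi_\phi(y) \coloneqq -\log\int e^{-\crochet{\tau_y D,\phi}}\,\mathcal{D}^\ast(\dd D)$, I would verify via \eqref{eqn:LTSDPPP} that the SDPPP($cW, e^{-\beta x}\dd x, \mathcal{D}^\ast$) reproduces $F_\phi$, which reduces to the identity
\[
e^{-\beta T_\phi} \;=\; c\int_\R \bigl(1 - e^{-\Psi_\phi(w)}\bigr)\,e^{-\beta w}\,\dd w,
\]
itself a consequence of the first two steps.

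The main obstacle will be the second step, namely the existence and identification of $\mathcal{D}^\ast$. The conditional law given $\max E \geq z$ is a ratio of two quantities both vanishing as $z \to \infty$, and producing a nondegenerate limit requires combining the precise Gumbel tail of $g$ with \eqref{eqn:21} in a uniform way in the test function. In addition, one needs vague tightness of the conditional family of point measures $\tau_{-\max E} E$ under $\P(\,\cdot\mid \max E \geq z)$ so that pointwise Laplace convergence upgrades to weak convergence on $\mathfrak{P}$, together with a verification that the limiting distribution does not depend on the mode of approximation chosen when replacing indicators of half-lines by continuous elements of $\mathfrak{T}$.
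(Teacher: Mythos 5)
The paper does not prove Theorem~\ref{thm:SuZ}: it is imported verbatim as a consequence of \cite[Theorem~9(a)]{SuZ15} and used as a black box in the necessity argument, so there is no in-paper proof to compare against. Your outline does follow the spirit of the Subag--Zeitouni strategy (extract the decoration via a conditional limit given a large maximum, identify the Poissonian scaffold through $g$, then close the loop on Laplace functionals), and you rightly flag the existence of the limit in \eqref{eqn:obtainingTheDecoration} as the crux.

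Step~1, however, contains a genuine error. The first-moment heuristic $1 - F_\phi(x) \sim \E[\crochet{\tau_x E, \phi}]$ is false for an SDPPP with a non-degenerate decoration: as $x\to-\infty$, conditionally on some mass of $\tau_x E$ entering the support of $\phi$, an entire decoration cluster enters, so $\crochet{\tau_x E,\phi}$ stays of order one and the linearization $1-e^{-u}\approx u$ is not justified. In the boundary case \eqref{eqn:boundaryCase} the situation is worse still, since $\E[S]=\infty$ makes $\E[\crochet{\tau_x E,\phi}]$ infinite. Consequently $e^{-\beta T_\phi}$ is not a linear functional of $\phi$; your own Step~3 identity, $e^{-\beta T_\phi} = c\int_\R(1-e^{-\Psi_\phi(w)})e^{-\beta w}\,\dd w$, is the correct (nonlinear) relation and contradicts the linearity claim. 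What \eqref{eqn:21} actually provides is regular variation of $1-g$ at $-\infty$ (equivalently \eqref{eqn:regularityLaplace}), which is what stabilizes the conditional ratios in Step~2; it does not of itself yield an integral formula for $T_\phi$. Steps~2 and~3 remain a programme rather than a proof --- carrying them out (tightness and weak convergence under $\P(\cdot\mid\max E\geq z)$, identification of $\mathcal{D}^\ast$, reconstruction of $F_\phi$) is precisely the content of \cite{SuZ15}, which this paper deliberately does not reproduce.
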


Observe that \eqref{eqn:21} can be rephrased in terms of the law of $W$, it corresponds to the condition
\begin{equation}
  \label{eqn:regularityLaplace}
  \lambda \mapsto  -\log \E\left( e^{-\lambda W} \right) \text{ is regularly varying at $0$ with index $1$.}
\end{equation}

The rest of the section therefore consists in proving that we are under the conditions of application of Theorem~\ref{thm:SuZ} with $W = S$ and $\beta = \alpha$. We link this problem to the characterization of the fixed point solutions to the smoothing transform. Precisely, we say that a measurable $[0,1]$-valued function $f$ is a fixed point of the smoothing transform associated to the point measure $\mathcal{Z}$ if
\begin{equation}
  \label{eqn:fixedPoint}
  \forall x > 0, f(x) = \E\left( \prod_{j \in \N} f(x e^{z_j}) \right).
\end{equation}
This formalism might be better understood observing that if $f$ is the characteristic function of a random variable $Y$, then \eqref{eqn:fixedPoint} can be rewritten
\[
  Y \egaldistr \sum_{j =1}^\infty e^{z_j} Y_j,
\]
where $(Y_j,j \in \N)$ are i.i.d. copies of $Y$.

The study of fixed points of the smoothing transform under increasingly general conditions has been the subject of a large corpus of studies, we refer to \cite{ABM12} and the references therein for an overview of the literature. We will use the characterization of fixed points of the smoothing transform obtained in that article, which can be rephrased as follows.
\begin{alttheorem}
\label{thm:ABM}
Under assumptions \eqref{eqn:regularityAssumption}, \eqref{eqn:bothCase} and \eqref{eqn:nonLattice}, let $f$ be a fixed point of the smoothing transform, satisfying \eqref{eqn:fixedPoint}. Suppose furthermore that $f$ satisfies one of the two following conditions:
\begin{enumerate}
  \item $f$ is left-continuous and non-increasing, or
  \item $f(0)=1$, $f(t) \to 1$ as $t \to 0$, and $\log (1 - f(e^{-z}))$ is uniformly continuous on $[K,\infty)$ for some $K>0$.
\end{enumerate}
Then there exists $h>0$ such that $f(t) = \E(e^{-h S t^\alpha})$ for all $t > 0$.
\end{alttheorem}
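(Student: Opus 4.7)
The plan is to reduce Theorem~\ref{thm:ABM} to the classical characterization of non-negative solutions of the distributional fixed-point equation \eqref{eqn:fixedPointEquation}. First, via the change of variables $s = t^{\alpha}$, introduce $\tilde f(s) \coloneqq f(s^{1/\alpha})$ for $s > 0$. A direct computation from \eqref{eqn:fixedPoint} shows that
\[
\tilde f(s) = \E\left(\prod_{j = 1}^\infty \tilde f\bigl(s\, e^{\alpha z_j}\bigr)\right), \qquad s > 0,
\]
and the multipliers $(e^{\alpha z_j})_j$ now satisfy the \emph{critical} condition $\E(\sum_j e^{\alpha z_j}) = 1$ by \eqref{eqn:regularityAssumption}. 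The regularity conditions (1)--(2) transfer in the obvious way to $\tilde f$. The goal is therefore to identify $\tilde f$ as the Laplace transform of $hS$ for some constant $h>0$.

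For each fixed $s>0$, define the multiplicative process
\[
M_n(s) \coloneqq \prod_j \tilde f\bigl(s\, e^{\alpha z_j^{(n)}}\bigr),
\]
where $(z_j^{(n)})_j$ is the ranked sequence of atoms of the $n$-th generation of the branching random walk associated with $\mathcal{Z}$. Iterating the branching property \eqref{eqn:branchingProp} together with the fixed-point equation for $\tilde f$ shows that $(M_n(s))_{n\ge 0}$ is a $[0,1]$-valued (hence uniformly integrable) martingale, so $M_n(s) \to M_\infty(s)$ almost surely and in $L^1$. In particular $\tilde f(s) = \E[M_n(s)] = \E[M_\infty(s)]$, so the problem reduces to showing that
\[
M_\infty(s) = e^{-hsS} \quad \text{almost surely, for some deterministic constant } h > 0.
\]

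The identification step proceeds in two parts. First, one checks that $M_\infty(s)$ is measurable with respect to the tail $\sigma$-field of the branching random walk, which under \eqref{eqn:bothCase} and \eqref{eqn:nonLattice} is precisely the $\sigma$-field generated by $S$ (the Biggins--Kyprianou $0$--$1$ law, see \cite{BiK05}). One may thus write $M_\infty(s) = e^{-S \Phi(s)}$ for some measurable $\Phi : (0,\infty) \to [0,\infty]$. Second, the scaling relations inherited from the branching structure of the multiplicative martingale force $\Phi$ to satisfy a Cauchy-type functional equation on $(0,\infty)$; the hypotheses (1) and (2) on $f$ provide exactly the regularity needed to conclude that the only admissible solution is the linear one $\Phi(s) = h s$, with $h>0$ excluded from being zero by non-triviality of $f$. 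This last step is the main obstacle: turning a purely measurable Cauchy equation into a linear one under two rather different regularity regimes (monotonicity and left-continuity on one hand, uniform continuity of $\log(1-f(e^{-\cdot}))$ on the other) is precisely the analytic content of \cite{ABM12}, and is where \eqref{eqn:nonLattice} enters decisively.
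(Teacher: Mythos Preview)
The paper does not prove Theorem~\ref{thm:ABM}: it is stated as a rephrasing of the main result of \cite{ABM12} and used as a black box, so there is no proof in the paper to compare your proposal against.

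That said, a few remarks on your sketch. The broad strategy (multiplicative martingale $M_n(s)$, identification of its a.s.\ limit) is indeed the backbone of \cite{ABM12} and \cite{BiK05}. But two of your steps, as written, are not correct. First, the Biggins--Kyprianou $0$--$1$ law does not assert that the tail $\sigma$-field of the branching random walk equals $\sigma(S)$; that statement is false in general. What one actually proves is that $M_\infty(s)$ can be disintegrated as $\exp(-W(s)\Phi(s))$ with $W(s)$ a fixed point of the additive smoothing transform (i.e.\ satisfying \eqref{eqn:fixedPointEquation}) and $\Phi$ deterministic; identifying $W(s)$ with a constant multiple of $S$ then requires the separate classification of such additive fixed points. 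Second, you correctly identify the ``Cauchy-type'' step as the crux but then defer it entirely to \cite{ABM12}; since that step is essentially the whole content of the theorem, your proposal is more an outline of where the difficulty lies than a proof. In short: your high-level picture is right, but the two key identifications (of $M_\infty$ in terms of $S$, and of $\Phi$ as linear under the stated regularity) are precisely the substance of \cite{ABM12}, not consequences of the facts you invoke.
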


With these two results in our hands, we can now prove the necessity part of Theorem~\ref{thm:main}. We begin with the following lemma.
\begin{lemma}
\label{lem:increasingFunctions}
Let $\phi\in\mathfrak{T}$ be non-decreasing. Under assumptions \eqref{eqn:regularityAssumption}, \eqref{eqn:bothCase} and \eqref{eqn:nonLattice} there exists $T_\phi \in \R$ such that 
$\displaystyle F_\phi(x) = \E\left( \exp\left( - S e^{\alpha (x - T_\phi)} \right) \right)$.
\end{lemma}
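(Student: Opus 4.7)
The plan is to reduce the lemma to Theorem~\ref{thm:ABM}, the characterization of fixed points of the smoothing transform. The key observation is that the branching identity satisfied by $E$ translates, upon taking Laplace functionals, into the multiplicative smoothing transform equation \eqref{eqn:fixedPoint} for the function $f(t) \coloneqq F_\phi(\log t)$ on $(0,\infty)$.

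The first step is to derive this equation. Applying the distributional identity \eqref{eqn:stablePointMeasure} to $\tau_x E$, conditioning on $Z$, and using the mutual independence of the copies $(E^{(j)})$, I would obtain
\[
  F_\phi(x) = \E\left( \prod_{j=1}^\infty F_\phi(x+z_j) \right),
\]
with the convention $F_\phi(-\infty)=1$. Rewriting this in terms of $f$ yields exactly the fixed-point equation \eqref{eqn:fixedPoint} for the smoothing transform associated to $\mathcal Z$.

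The second step is to check that $f$ meets the first hypothesis of Theorem~\ref{thm:ABM}. Since $\phi$ is non-decreasing, the map $x \mapsto \crochet{\tau_x E, \phi}$ is pathwise non-decreasing, so $F_\phi$, and hence $f$, is non-increasing. Since $\phi \in \mathfrak T$ is continuous and bounded with support bounded on the left, $\crochet{\tau_x E, \phi}$ only involves the a.s.~finitely many atoms of $E$ lying above some level depending on $x$, and a standard dominated convergence argument gives continuity of $F_\phi$, hence left-continuity of $f$. Theorem~\ref{thm:ABM} then produces some $h_\phi > 0$ such that $f(t) = \E(e^{-h_\phi S t^\alpha})$ for all $t > 0$, and setting $T_\phi \coloneqq -\alpha^{-1}\log h_\phi$ returns the claimed expression for $F_\phi$.

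The main obstacle I anticipate is excluding the trivial solution $f \equiv 1$, which is necessary to get $h_\phi > 0$ and hence $T_\phi \in \R$. For $\phi$ non-decreasing and non-identically zero, $\phi$ is eventually bounded below by some $\epsilon > 0$ on a half-line $[y_0,\infty)$. Assuming $E$ is itself non-trivial (which is implicit in the statement, as an identically zero $E$ is incompatible with $T_\phi \in \R$), shifting $x$ far enough to the right places, with positive probability, at least one atom of $\tau_x E$ inside that half-line, which forces $\crochet{\tau_x E, \phi} > 0$ and therefore $F_\phi(x) < 1$ for some $x$. Once this non-triviality is in hand, the remainder of the argument is routine algebraic manipulation.
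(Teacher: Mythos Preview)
Your argument is essentially identical to the paper's: define $f(t)=F_\phi(\log t)$, derive the smoothing-transform equation from \eqref{eqn:stablePointMeasure}, use monotonicity of $\phi$ to get $f$ non-increasing, dominated convergence for continuity, and then invoke Theorem~\ref{thm:ABM}. Your extra paragraph on excluding the trivial solution $f\equiv 1$ is a reasonable point of care that the paper simply leaves implicit.
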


\begin{proof}
For all $t \geq 0$, we set $f(t) = F_\phi(\log t)$. Using \eqref{eqn:stablePointMeasure}, we have
\begin{align*}
  \E\left(\prod_{j \geq 1} f(te^{z_j})\right) &= \E\left( \exp\left( -\sum_{j \geq 1} \crochet{\tau_{z_j + \log t} E^{(j)},\phi} \right) \right)\\
  &= \E\left( \exp\left( - \crochet{\tau_{\log t}\sum_{j \geq 1} \tau_{z_j} E^{(j)},\phi} \right) \right)\\
  &= \E\left( \exp\left( -\crochet{\tau_{\log t} E,\phi} \right) \right) = f(t).
\end{align*}
Additionally, as $\phi$ is continuous and non-decreasing, we obtain that $f$ is non-increasing, and continuous by the dominated convergence theorem.

Hence, by Theorem~\ref{thm:ABM}, there exists $h> 0$ such that
\[
  f(t) = \E(e^{-h S t^\alpha}) = \E\left( \exp\left( - S e^{\alpha (\log t -T_\phi)} \right) \right),
\]
setting $T_\phi = (\log h)/\alpha$. This finishes the proof.
\end{proof}

For all $x \in \R$, we set
\begin{equation}
  \label{eqn:formulaG}
  g(x) = \E\left( \exp\left( - S e^{\alpha x} \right) \right).
\end{equation}
We note that $g$ is a continuous and decreasing function. Lemma \ref{lem:increasingFunctions} shows that for any non-decreasing function $\phi \in \mathfrak{T}$, there exists $T_\phi \in \R$ such that
\[
  \E\left( \exp\left( - \crochet{\tau_z E,\phi} \right) \right) = g(t - T_\phi).
\]
We recall the asymptotic behaviour of $g$ as $t \to 0$.
\begin{alttheorem}
\label{thm:BIM}
Under assumptions \eqref{eqn:regularityAssumption}, \eqref{eqn:bothCase} and \eqref{eqn:nonLattice}, we have
\[
  g(z) =
  \begin{cases}
    1 - e^{\alpha z} (1 + o(1)) &\text{if  \eqref{eqn:regularCase} holds}\\
    1 + \alpha z e^{\alpha z} (1 + o(1)) & \text{ if \eqref{eqn:boundaryCase} holds}
  \end{cases}
  \quad
  \text{ as $z \to -\infty$.}
\]
\end{alttheorem}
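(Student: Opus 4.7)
The plan is to change variables $t = e^{\alpha z}$, so that $\alpha z = \log t$ and the statement becomes an asymptotic for the Laplace transform of $S$ near zero:
\[
1 - \E(e^{-tS}) \sim t \quad \text{under \eqref{eqn:regularCase}}, \qquad 1 - \E(e^{-tS}) \sim -t\log t \quad \text{under \eqref{eqn:boundaryCase}},
\]
as $t \to 0^+$. The two cases will be handled separately, since they rely on very different properties of $S$.

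For the regular case, under \eqref{eqn:regularCase} the Biggins--Lyons theorem \cite{Big77,Lyo97} ensures uniform integrability of the additive martingale $\crochet{Z_n,\exp_\alpha}$, which has constant mean $1$ by \eqref{eqn:regularityAssumption}; hence $\E(S) = 1$. I would then conclude by dominated convergence applied to the integrand $(1-e^{-tS})/t$, which satisfies $0 \leq (1-e^{-tS})/t \leq S \in L^1$ and converges pointwise to $S$ as $t \to 0^+$. This gives $1-\E(e^{-tS}) \sim t \, \E(S) = t$ and hence the stated asymptotic for $g$.

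For the boundary case, $S$ has infinite mean and the question is genuinely delicate. The key ingredient is a sharp right-tail estimate of the form $\P(S > x) \sim C/x$ as $x \to \infty$ for an explicit constant $C$. Once it is available, the identity
\[
1 - \E(e^{-tS}) = \int_0^\infty t e^{-tx}\,\P(S>x)\,dx
\]
combined with a direct evaluation (splitting the integral at $x = 1/t$, controlling the lower part crudely by $O(t)$ and computing the upper part via the substitution $u = tx$ and the exponential-integral asymptotic $\int_t^\infty e^{-u}/u \, du \sim -\log t$) yields $1 - \E(e^{-tS}) \sim -t\log t$, with the constant matching up through the precise normalization of $S$ given by \eqref{eqn:boundaryCase}.

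The main obstacle is the right-tail estimate for $S$ in the boundary case: establishing $\P(S > x) \sim C/x$ rigorously requires a spinal change of measure together with truncated second-moment bounds for the derivative martingale along the spine, building on the analysis of the derivative martingale by A\"{\i}d\'{e}kon, Hu--Shi and Madaule, and sharpened by Buraczewski, Iksanov and Mallein (whose initials furnish the label of Theorem~\ref{thm:BIM}). Rather than reproducing this substantial argument, I would invoke their result as a black box and merely record the (short) Tauberian deduction of the Laplace-transform asymptotic sketched above.
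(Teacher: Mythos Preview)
Your proposal is correct and matches the paper's own proof: both handle the regular case via $\E(S)=1$ from uniform integrability of the additive martingale, and both invoke \cite{BIM} as a black box for the boundary case. The paper in fact cites \cite[Theorem~2.1]{BIM} directly for the Laplace-transform asymptotic $1-\E(e^{-tS})\sim -t\log t$, so your Tauberian detour through the tail estimate $\P(S>x)\sim C/x$ is not needed, though it is a valid alternative. One slip in your sketch of that detour: the split should be at a \emph{fixed} level $x=A$ rather than at $x=1/t$; with your split the ``lower part'' $\int_0^{1/t} te^{-tx}\P(S>x)\,dx$ already carries the main logarithmic term (since $\P(S>x)\sim C/x$ makes $\int_A^{1/t} dx/x \sim -\log t$) and is not $O(t)$.
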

\begin{proof}
The asymptotic behaviour of $g(z)$ as $z \to -\infty$ under assumption \eqref{eqn:regularCase} is a direct consequence of the fact that $\E(S) = 1$ as the martingale $(\crochet{Z_n,\exp_\alpha})$ is uniformly integrable, see Section~\ref{subsec:13}. Under assumption \eqref{eqn:boundaryCase}, we use \cite[Theorem 2.1]{BIM} to compute an equivalent for $1-g(z)$ as $z \to -\infty$.
\end{proof}

Using Lemma \ref{lem:increasingFunctions} and Theorem~\ref{thm:BIM}, we can obtain estimates on the law of $E$. In particular, we obtain the law of $\max E$.
\begin{proposition}
\label{prop:maxDis}
There exists $c_m > 0$ such that for all $x \in \R$
\[
  \P(\max E \leq x) = \E\left(  e^{- c_m S e^{-\alpha x}}\right).
\]
In particular, $z \mapsto -\log \P(\max E > z)$ is uniformly continuous on $[K, \infty)$ for some $K$ large enough.
\end{proposition}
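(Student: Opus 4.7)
My plan is to deduce the explicit formula for $\P(\max E\le x)$ by applying Lemma~\ref{lem:increasingFunctions} to an approximation of the indicator of a half-line, and then to derive the uniform continuity from the asymptotic behaviour of $g$ provided by Theorem~\ref{thm:BIM}.

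Concretely, I pick a sequence $\phi_n \in \mathfrak{T}$ of continuous non-decreasing functions with $\phi_n=0$ on $(-\infty,0]$ and $\phi_n(y)\nearrow+\infty$ for every $y>0$, for instance $\phi_n(y)=n\min(y_+,1)$. Monotone convergence yields $F_{\phi_n}(x)\searrow\P(\max E\le -x)$, since $e^{-\crochet{\tau_x E,\phi_n}}\searrow\ind{\max E\le -x}$. On the other hand, Lemma~\ref{lem:increasingFunctions} gives $F_{\phi_n}(x)=g(x-T_n)$ for some $T_n\in\R$, where $g$ is defined by \eqref{eqn:formulaG}. Since $g$ is strictly decreasing, $(T_n)$ must be monotone with limit $T\in[-\infty,\infty)$. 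The possibility $T=-\infty$ would force $\P(\max E\le -x)=g(+\infty)=\P(S=0)$ for every $x\in\R$, contradicting $\max E<\infty$ a.s.\ (since $E\in\mathfrak{P}$) together with the non-degeneracy of $S$ ensured by \eqref{eqn:bothCase}. Hence $T\in\R$, and substituting $y=-x$ gives $\P(\max E\le y)=g(-y-T)=\E(e^{-c_m S e^{-\alpha y}})$ with $c_m:=e^{-\alpha T}>0$.

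For the second assertion, I set $h(z):=-\log\P(\max E>z)=-\log(1-g(-z-T))$, which is smooth in $z$ because $g$ is. Invoking Theorem~\ref{thm:BIM} and its consequence that $(1-g(x+y))/(1-g(x))\to e^{\alpha y}$ as $x\to-\infty$ for every $y\in\R$, I get $h(z+\delta)-h(z)\to\alpha\delta$ as $z\to\infty$ for every fixed $\delta$; equivalently, the derivative $h'(z)=-g'(-z-T)/(1-g(-z-T))$ converges to $\alpha$ as $z\to\infty$. In the regular case this is transparent from $-g'(x)\sim\alpha e^{\alpha x}\E(S)=\alpha e^{\alpha x}$ (dominated convergence applied to $-g'(x)=\alpha e^{\alpha x}\E(Se^{-Se^{\alpha x}})$, using $\E(S)=1$) combined with $1-g(x)\sim e^{\alpha x}$. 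In the boundary case the same conclusion holds using the asymptotic $1-g(x)\sim-\alpha x\,e^{\alpha x}$. Thus $h'$ is bounded on $[K,\infty)$ for $K$ large, so $h$ is Lipschitz and hence uniformly continuous on that interval.

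The main obstacle is the boundary case of the uniform continuity, where one cannot naively differentiate the asymptotic expansion from Theorem~\ref{thm:BIM}; this is why relying on the regular-variation statement $(1-g(x+y))/(1-g(x))\to e^{\alpha y}$ (a pointwise statement that, by monotonicity of $g$, upgrades to locally uniform convergence in $y$) is the most economical route. It gives directly that the finite differences of $h$ on $[K,\infty)$ are bounded above by $\alpha\delta+\epsilon$ for arbitrary $\epsilon>0$ and $K$ large, which yields uniform continuity without any delicate analysis of $g'$.
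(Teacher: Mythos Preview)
Your proof is correct and follows the same route as the paper: approximate the half-line indicator by non-decreasing test functions in $\mathfrak{T}$, apply Lemma~\ref{lem:increasingFunctions}, pass to the limit, and then read off uniform continuity from the asymptotics of $g$ in Theorem~\ref{thm:BIM}. The only differences are in execution: the paper performs a two-parameter limit (first $\lambda\to\infty$, then $\mu\to\infty$), which along the way yields the intermediate identity $\E\bigl(e^{-\mu E((-z,\infty))}\bigr)=g(z-T_{\infty,\mu})$ reused in Corollary~\ref{cor:conv}, and it deduces uniform continuity by writing $-\log(1-g(-z))$ as an explicitly uniformly continuous function plus $o(1)$ (then invoking Heine--Cantor on the compactified half-line) rather than via the regular-variation ratio and monotone local uniformity as you do.
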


\begin{proof}
For $\lambda > 0$, we set
\[
  \phi_\lambda(x) = \max(0, \min(1,\lambda x)).
\]
Applying the monotone convergence theorem, for all $z \in \R$ and $\mu > 0$, we have
\[
  \lim_{\lambda \to \infty} F_{\mu \phi_\lambda}(z) = \E\left( e^{- \mu E((-z,\infty))} \right).
\]
By Lemma \ref{lem:increasingFunctions}, for all $\lambda,\mu \geq 0$, there exists $T_{\lambda,\mu}$ such that $F_{\mu \phi_\lambda}(z) = g(z - T_{\lambda,\mu})$. More precisely, as $g$ is continuous, decreasing and $\lim_{z \to -\infty} g(z) = 1$, we have $T_{\lambda,\mu} = - g^{-1}(F_{\mu \phi_\lambda}(0))$. As $g^{-1}$ is continuous, $T_{\lambda,\mu}$ converges, as $\lambda \to \infty$ to
\[
  T_{\infty,\mu} = g^{-1}\left( \E\left( e^{- \mu E((-z,\infty))} \right)\right).
\]
As a result, we deduce that for all $\mu > 0$, we have
\begin{equation}
  \label{eqn:formulaInterest}
  \forall z \in \R, \  \E\left( e^{- \mu E((-z,\infty))} \right) = g\left( z - T_{\infty,\mu}\right).
\end{equation}
Letting $\mu\to\infty$ and using again the monotone convergence theorem, we have
\[
  \forall z \in \R, \ \P(\max E \leq -z) = g\left( z - T_{\infty,\infty} \right),
\]
with $T_{\infty,\infty} = -g^{-1}(\P(\max E \leq 0))$. By definition of $g$, we obtain 
\[\P(\max E \leq x) = \E\left(  e^{- c_m S e^{-\alpha x}}\right),\]
where we have set $c_m = e^{-\alpha T_{\infty,\infty}}$. Then as a consequence of Theorem \ref{thm:BIM}, we have
\[
  -\log (1 - g(-z)) =
  \begin{cases}
    \alpha z + o(1) &\text{if  \eqref{eqn:regularCase} holds}\\
    \alpha z - \log(\alpha z) + o(1) & \text{ if \eqref{eqn:boundaryCase} holds}
  \end{cases}
  \quad 
  \text{ as $z \to \infty$.}
\]
Thus $z \mapsto -\log \P(\max E \geq z)$ is uniformly continuous on $[K,\infty)$, for some $K$, using the Heine--Cantor theorem and the above asymptotic.
\end{proof}

Using Lemma \ref{lem:increasingFunctions}, we also prove that the law of the number of points to the right of $z$ conditioned on $\max E \geq z$ converges as $z \to \infty$.
\begin{corollary}
\label{cor:conv}
There exists a probability measure $\nu$ on $\N$ such that
\[
  \forall k \in \N, \quad \lim_{z \to \infty} \P(E((z,\infty)) = k | E((z,\infty)) > 0) = \nu(k).
\]
\end{corollary}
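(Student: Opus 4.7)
The plan is to identify the limit law $\nu$ through its Laplace transform, then appeal to the continuity theorem for integer-valued random variables. Writing $X_z \coloneqq E((z,\infty))$, for $\mu > 0$ I consider the conditional Laplace transform
\[
L_z(\mu) \coloneqq \E\bigl(e^{-\mu X_z} \bigm| X_z > 0\bigr) = \frac{\E(e^{-\mu X_z}) - \P(X_z = 0)}{\P(X_z > 0)}.
\]
By formula~\eqref{eqn:formulaInterest} applied with $-z$ in place of $z$ one has $\E(e^{-\mu X_z}) = g(-z - T_{\infty,\mu})$, and Proposition~\ref{prop:maxDis} gives $\P(X_z > 0) = 1 - g(-z - T_{\infty,\infty})$, so
\[
L_z(\mu) = 1 - \frac{1 - g(-z - T_{\infty,\mu})}{1 - g(-z - T_{\infty,\infty})}.
\]

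To pass to the limit as $z \to \infty$, I would invoke Theorem~\ref{thm:BIM}: in both the regular and the boundary case, $1 - g(-z - T)$ equals $e^{\alpha T}$ times $1 - g(-z)$ up to a $1 + o(1)$ factor (the extra prefactor $\alpha z$ that appears in the boundary case cancels in the ratio via $(z + T_{\infty,\mu})/(z + T_{\infty,\infty}) \to 1$). This yields
\[
\lim_{z \to \infty} L_z(\mu) = 1 - e^{\alpha(T_{\infty,\infty} - T_{\infty,\mu})} \eqqcolon L(\mu),
\]
a number in $[0,1)$ since $\mu \mapsto T_{\infty,\mu}$ is non-increasing with $\lim_{\mu \to \infty} T_{\infty,\mu} = T_{\infty,\infty}$ (monotonicity follows because $\mu \mapsto F_{\mu\phi_\lambda}$ is non-increasing and $g$ is decreasing).

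It then remains to verify that $L$ is the Laplace transform of a probability measure $\nu$ on $\N$, after which the continuity theorem for Laplace transforms together with the fact that $X_z \mid X_z > 0$ is $\N$-valued yields the stated pointwise convergence $\P(X_z = k \mid X_z > 0) \to \nu(k)$. The main obstacle is tightness, i.e.\ showing $L(0^+) = 1$ so that no mass escapes to infinity. I would derive this from $T_{\infty,\mu} \to +\infty$ as $\mu \to 0^+$: since $E((-z,\infty)) \in \Z_+$ is a.s.\ finite (because $E \in \mathfrak{P}$), dominated convergence gives $\E(e^{-\mu E((-z,\infty))}) \to 1$ as $\mu \to 0^+$ for any fixed $z$, and the identity $g(z - T_{\infty,\mu}) = \E(e^{-\mu E((-z,\infty))})$ obtained in the proof of Proposition~\ref{prop:maxDis} forces $z - T_{\infty,\mu} \to -\infty$. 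Hence $L(\mu) \to 1$ as $\mu \to 0^+$, completing the argument.
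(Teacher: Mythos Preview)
Your proof is correct and follows essentially the same route as the paper: both compute the conditional Laplace transform of $E((z,\infty))$ given $\{E((z,\infty))>0\}$ via \eqref{eqn:formulaInterest}, pass to the limit using Theorem~\ref{thm:BIM}, and conclude by the continuity theorem. The paper records the key monotonicity fact more tersely (``$T_{\infty,\mu}$ decreases from $\infty$ to $T_{\infty,\infty}$''), whereas you spell out the tightness step $T_{\infty,\mu}\to+\infty$ via dominated convergence, but the arguments are otherwise the same.
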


\begin{proof}
We recall from \eqref{eqn:formulaInterest} in the proof of Proposition~\ref{prop:maxDis} that for all $\mu > 0$, we have
\[
  \forall z \in \R, \ \E\left( e^{-\mu E((z,\infty))} \right) = g(-z - T_{\infty,\mu}),
\]
with $T_{\infty,\mu} = - g^{-1}\left( \E\left( e^{-\mu E((0,\infty))} \right)\right)$. As $g$ is decreasing on $\R$, we observe that $T_{\infty,\mu}$ decreases from $\infty$ to $T_{\infty,\infty}$ on $(0,\infty)$.

As a result, we obtain
\[
  \E\left( 1 -  e^{-\mu E((z,\infty))} \middle| E((z,\infty)) > 0 \right) = \frac{1 - g(-z-T_{\infty,\mu})}{1-g(-z-T_{\infty,\infty})}.
\]
Letting $z \to \infty$ and using Theorem~\ref{thm:BIM}, we obtain
\[
  \lim_{z \to \infty} \E\left( 1 -  e^{-\mu E((z,\infty))} \middle| E((z,\infty)) > 0 \right) = e^{-\alpha (T_{\infty,\mu} - T_{\infty,\infty})}.
\]
It yields
\begin{equation}
  \label{eqn:laplaceConv}
  \lim_{z \to \infty} \E\left( e^{-\mu E((z,\infty))} \middle| E((z,\infty)) > 0 \right) = 1 - e^{-\alpha (T_{\infty,\mu} - T_{\infty,\infty})}.
\end{equation}
Therefore, the Laplace transform of the law of $E((z,\infty))$, conditionally on $E((z,\infty)) > 0$, converges pointwise to a continuous function. As a result, we deduce that $E((z,\infty))$ converges in distribution to an integer-valued random variable $X$ with Laplace transform $\E(e^{-\mu X}) = 1 - e^{-\alpha (T_{\infty,\mu} - T_{\infty,\infty})}$.
\end{proof}

We now turn to proving that for any $\phi \in \mathfrak{T}$, the function $F_\phi(\log t)$ is uniformly continuous in a neighbourhood of $\infty$, so Theorem~\ref{thm:ABM} can be used again to characterize the Laplace transform $F_\phi$.

\begin{lemma}
\label{lem:compact}
Let $\phi$ be a compactly supported function in $\mathfrak{T}$. Under assumptions \eqref{eqn:regularityAssumption}, \eqref{eqn:bothCase} and \eqref{eqn:nonLattice} there exists $T_\phi \in \R$ such that 
\[\forall x \in \R, \ F_\phi(x) = \E\left( \exp\left( - S e^{\alpha (x - T_\phi)} \right) \right).\]
\end{lemma}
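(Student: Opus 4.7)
My plan is to verify condition~(2) of Theorem~\ref{thm:ABM} for the function $f(t)=F_\phi(\log t)$. That $f$ satisfies the smoothing transform equation~\eqref{eqn:fixedPoint} follows from the branching property of $E$, exactly as in the proof of Lemma~\ref{lem:increasingFunctions}. The boundary condition $f(t)\to 1$ as $t\to 0^+$ (so we may set $f(0)=1$) is immediate from the compactness of $\mathrm{supp}(\phi)\subseteq[a,b]$: as $x\to-\infty$ each atom of $\tau_xE$ eventually lies below $a$, so $\crochet{\tau_xE,\phi}\to 0$ almost surely (using that $E\in\mathfrak{P}$ has only finitely many atoms above any threshold), and dominated convergence gives $F_\phi(x)\to 1$.

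The heart of the proof is the uniform continuity of $z\mapsto\log(1-F_\phi(-z))$ on $[K,\infty)$. I would establish it by sandwiching $1-F_\phi(-z)$ between two quantities whose logarithms are uniformly continuous. For the upper bound, take a non-decreasing $\bar\phi\in\mathfrak{T}$ with $\bar\phi\geq\phi$ (for instance a continuous approximation from above of $\|\phi\|_\infty\mathbbm{1}_{[a,\infty)}$); Lemma~\ref{lem:increasingFunctions} then gives $F_{\bar\phi}(x)=g(x-T_{\bar\phi})$, and the monotonicity $F_\phi\geq F_{\bar\phi}$ yields $1-F_\phi(-z)\leq 1-g(-z-T_{\bar\phi})$. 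For the lower bound, pick $[a',b']\subset\mathrm{int}\,\mathrm{supp}(\phi)$ on which $\phi\geq c>0$; then $\crochet{\tau_{-z}E,\phi}\geq c$ on the event $\{E([a'+z,b'+z])\geq 1\}$, and a straightforward inclusion--exclusion gives
\[
  1-F_\phi(-z)\geq (1-e^{-c})\bigl(\P(\max E\geq a'+z)-\P(\max E>b'+z)\bigr),
\]
which by Proposition~\ref{prop:maxDis} and Theorem~\ref{thm:BIM} is bounded below by a positive multiple of $e^{-\alpha z}$ in the regular case, or of $ze^{-\alpha z}$ in the boundary case.

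From this sandwich combined with the asymptotics in Theorem~\ref{thm:BIM}, one has $-\log(1-F_\phi(-z))=\alpha z+O(1)$ in the regular case and $\alpha z-\log z+O(1)$ in the boundary case. To upgrade this boundedness into genuine uniform continuity, I would note that the reference function $z\mapsto -\log(1-g(-z-T))$ is uniformly continuous on $[K,\infty)$ by Heine--Cantor applied to its explicit asymptotic (exactly as at the end of the proof of Proposition~\ref{prop:maxDis}), and that the smoothing transform equation for $F_\phi$, combined with a telescoping estimate of $F_\phi(-z-\delta)$ versus $F_\phi(-z)$, transfers this regularity: it shows that the ratio $(1-F_\phi(-z))/(1-g(-z-T_\phi))$ admits a positive finite limit as $z\to\infty$ for a suitable $T_\phi$, so that $\log(1-F_\phi(-z))-\log(1-g(-z-T_\phi))$ is continuous with a finite limit at infinity, hence uniformly continuous on $[K,\infty)$.

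Once condition~(2) of Theorem~\ref{thm:ABM} is verified, one concludes $f(t)=\E[e^{-hSt^\alpha}]$ for some $h>0$, equivalently $F_\phi(x)=g(x-T_\phi)$ with $T_\phi=-(\log h)/\alpha$. The main obstacle is clearly this last upgrade step: the sandwich only gives boundedness of the logarithmic remainder, and obtaining genuine non-oscillation (i.e.\ convergence of the ratio to a positive constant) requires carefully combining the branching fixed-point structure of $F_\phi$ with the asymptotic regularity of $g$.
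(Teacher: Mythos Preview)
Your setup is correct---$f(t)=F_\phi(\log t)$ is indeed a fixed point of the smoothing transform and $f(t)\to 1$ as $t\to 0^+$---but the crucial uniform-continuity step has a genuine gap. The sandwich argument only yields $-\log(1-F_\phi(-z))=\alpha z+O(1)$ (resp.\ $\alpha z-\log z+O(1)$), and boundedness of a remainder does \emph{not} imply uniform continuity: a function such as $\alpha z+\sin(z^2)$ satisfies the same bounds but is not uniformly continuous. Your proposed ``upgrade''---that $(1-F_\phi(-z))/(1-g(-z-T_\phi))$ converges to a positive constant---is essentially the conclusion you are trying to reach; the smoothing-transform relation for $F_\phi$ does not by itself rule out oscillation within your sandwich, and the telescoping estimate you allude to is never made precise. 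As you yourself note, this is the main obstacle, and it is not overcome.

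The paper bypasses this difficulty by a direct factorization rather than a sandwich. Assuming (WLOG) that $\phi$ is supported in $(0,\infty)$, one has $1-F_\phi(-z)=\E\bigl[1-e^{-\crochet{\tau_{-z}E,\phi}}\,\big|\,\max E>z\bigr]\,\P(\max E>z)$. For $0\le z\le z'$ both numerator $1-F_\phi(-z')$ and denominator $1-F_\phi(-z)$ can be conditioned on the \emph{same} event $\{\max E>z\}$, so the factor $\P(\max E>z)$ cancels in the log-ratio:
\[
  h(z)-h(z')=\log\frac{\E\bigl[1-e^{-\crochet{\tau_{-z'}E,\phi}}\,\big|\,\max E>z\bigr]}{\E\bigl[1-e^{-\crochet{\tau_{-z}E,\phi}}\,\big|\,\max E>z\bigr]}.
\]
Corollary~\ref{cor:conv} then gives two things: the denominator is bounded below by some $\eta>0$ uniformly for large $z$, and the conditional law of $E((z,\infty))$ is tight. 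Combining tightness with the modulus of continuity $\omega(\delta)$ of $\phi$, the difference between numerator and denominator is at most $(e^{\omega(\delta)A}-1)+\epsilon$ on the event $\{E((z,\infty))\le A\}$, which can be made arbitrarily small. This yields uniform continuity of $h$ directly, without any convergence-of-ratios argument. The ingredient your approach is missing is precisely this use of Corollary~\ref{cor:conv} (tightness of the number of high points) together with the uniform continuity of $\phi$, which controls the \emph{oscillation} of the conditional expectation rather than just its size.
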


\begin{proof}
Without loss of generality, we assume that the support of $\phi \in \mathfrak{T}$ is a compact subset of $(0,\infty)$.
Similarly to the previous proof, we set $f(t) = F_\phi(\log t)$ for all $t > 0$. Then with \eqref{eqn:stablePointMeasure}, we have
\[
  f(t) = \E(\prod_{j \geq 1} f(te^{z_j})),
\]
hence $f$ is a fixed point of the smoothing transform. By dominated convergence, $t \mapsto f(t)$ is continuous, hence to apply Theorem~\ref{thm:ABM}, it is enough to prove that 
\[
  h: z \mapsto -\log (1 - f(e^{-z})) = -\log \E\left( 1 - e^{-\crochet{\tau_{-z} E, \phi}} \right)
\]
is uniformly continuous on $[K,\infty)$ for some $K>0$.

Using that $\phi$ has support in $\R_+$, for all $z \in \R_+$, we have
\[
   \E\left( 1 - e^{-\crochet{\tau_{-z} E, \phi}} \right) =  \E\left( 1 - e^{-\crochet{\tau_{-z} E, \phi}} \middle| \max E > z \right)\P(\max E > z).
\]
Therefore, for all $0 \leq z \leq z'$, we have
\begin{equation}
  \label{eqn:towarduc}
  h(z) - h(z') = \log  \frac{\E\left( 1 - e^{-\crochet{\tau_{-z'} E, \phi}} \right)}{\E\left( 1 - e^{-\crochet{\tau_{-z} E, \phi}} \right)}
  = \log  \frac{  \E\left(1 -e^{-\crochet{\tau_{-z'} E, \phi}} \middle| \max E > z \right)}{\E\left(1 - e^{-\crochet{\tau_{-z} E, \phi}} \middle| \max E > z \right)}
\end{equation}
We note that by Corollary~\ref{cor:conv}, we have
\begin{multline}
  \label{eqn:bound}
  1 \geq \limsup_{z \to \infty} \E\left(1 - e^{-\crochet{\tau_{-z} E, \phi}} \middle| \max E > z \right) \\
  \geq \liminf_{z \to \infty} \E\left(1 - e^{-\crochet{\tau_{-z} E, \phi}} \middle| \max E > z \right) \\
  \geq 1 - \lim_{z \to \infty} \E\left( e^{-\Vert \phi \Vert_\infty E((z,\infty))} \middle| \max E > z \right) > \eta
\end{multline}
for some $\eta > 0$.

For all $\delta > 0$, we set $\omega(\delta) = \sup_{y \geq 0, y' \geq 0, |y'-y| < \delta} |\phi(y') - \phi(y)|$. We have $\lim_{\delta \to 0} \omega(\delta) = 0$, by uniform continuity of $\phi$. Then, using the tightness of the law of $E((z,\infty))$ conditionally on $\max E > z$, for all $\epsilon > 0$, there exists $A > 0$ such that
\begin{align*}
  &\left| \E\left( 1 - e^{-\crochet{\tau_{-z'} E, \phi}}\middle| \max E > z \right) - \E\left( 1 - e^{-\crochet{\tau_{-z} E, \phi}} \middle| \max E > z \right) \right|\\ 
  &\leq \E\left( \left| e^{-\crochet{\tau_{-z'} E, \phi}} - e^{-\crochet{\tau_{-z} E, \phi}} \right| \ind{E((z,\infty)) \leq A} \middle| \max E > z \right)+\epsilon\\
  &\leq \E\left( e^{-\crochet{\tau_{-z}E,\phi}} \left| e^{\omega(\delta) E((z,\infty))} - 1 \right| \ind{E((z,\infty)) \leq A} \middle| \max E > z \right) + \epsilon\\ 
  &\leq (e^{\omega(\delta) A} - 1 ) + \epsilon.
\end{align*}
Thus, choosing $\delta > 0$ small enough, for all $z,z' \in \R_+$ with $|z-z'| \leq \delta$, \eqref{eqn:towarduc} yields
\[
  |h(z') - h(z)| \leq \left|\log\left( 1 - \frac{2\epsilon}{\E\left(1 - e^{-\crochet{\tau_{-z} E, \phi}} \middle| \max E > z \right)} \right)\right|.
\]
As a result, in view of \eqref{eqn:bound}, there exists $K>0$ large enough such that
\[
  |h(z') - h(z)| \leq 4\epsilon /\eta
\]
for all $z,z' > K$ with $|z'-z| < \delta$. This proves the uniform continuity of $h$.

Then, applying Theorem \ref{thm:ABM}, we conclude there exists $T_\phi \in \R$ such that $F_\phi(z) = g(z - T_\phi)$ for all $z \in \R$, completing the proof.
\end{proof}

\begin{proof}[Proof of necessity part of Theorem~\ref{thm:main}]
Let $E$ be a random point measure satisfying \eqref{eqn:stablePointMeasure}. By Lemma~\ref{lem:compact}, for any $\phi \in \mathfrak{T}$ with compact support, there exists $T_\phi \in \R$ such that
\[
  \E\left( \exp\left( - \crochet{\tau_z E,\phi} \right) \right) = \E\left( e^{-Se^{\alpha (z - T_\phi)}} \right) = g(z - T_\phi).
\]
Additionally, by Theorem~\ref{thm:BIM}, the function $g$ satisfies \eqref{eqn:21}. Therefore, by Theorem~\ref{thm:SuZ}, there exists $c>0$ such that $E$ is a SDPPP($cS$, $e^{-\alpha x}\dd x$, $\mathcal{D}^\ast$), with $\mathcal{D}^\ast$  the distribution defined in \eqref{eqn:obtainingTheDecoration}.
\end{proof}

\bibliographystyle{alpha}

\end{document}